\newtheorem{thm}{Theorem}[section]
\newtheorem{lem}[thm]{Lemma}
\newtheorem{nte}[thm]{Note}
\newcommand{\N}{{\mathbb N}}
\DeclareMathOperator{\arcsinh}{arcsinh}
\DeclareMathOperator{\sy}{sys}
\DeclareMathOperator{\msy}{msys}
\DeclareMathOperator{\dist}{dist}
\title{Construction of hyperbolic Riemann surfaces with large systoles}
\author{Hugo Akrout and Bjoern Muetzel}
\begin{document}

\maketitle

\begin{abstract}
Let $S$ be a compact hyperbolic Riemann surface of genus $g \geq 2$. We call a \textit{systole} a shortest simple closed geodesic in $S$ and denote by $\sy(S)$ its length. Let $\msy(g)$ be the maximal value that $\sy(\cdot)$ can attain among the compact Riemann surfaces of genus $g$. We call a \textit{(globally) maximal} surface $S_{max}$ a compact Riemann surface of genus $g$ whose systole has length $\msy(g)$. In Section 2 we use cutting and pasting techniques to construct compact hyperbolic Riemann surfaces with large systoles from maximal surfaces. This enables us to prove several inequalities relating $\msy(\cdot)$ of different genera. In Section 3 we derive similar intersystolic inequalities for non-compact hyperbolic Riemann surfaces with cusps.\\  

Keywords:  Riemann surfaces, systoles, intersystolic inequalities, maximal surfaces.\\
Mathematics Subject Classification (2010): 30F10, 32G15 and 53C22.
\end{abstract}

\section{Introduction}
Let $S$ be a compact hyperbolic Riemann surface of genus $g \geq 2$. A systole of $S$ is a shortest simple closed geodesic. We denote by $\sy(S)$ its length. Let $\msy(g)$ be the value
\[
     \msy(g) = \sup \{ \sy(S) \mid S \text{ compact hyperbolic Riemann surface of genus } g \geq 2 \}.
\]
Due to Mumford's generalization of Mahler's compactness theorem in \cite{mu}, this supremum is a maximum. The exact value of $\msy(g)$ is only known for $g=2$. We also have the following estimates from \cite{bs}:\\
There exists a universal, but unknown, constant $C > 0$ such that for all genera $g \geq 2$
\begin{equation}
      C \cdot \log(g) \leq \msy(g) \leq 2 \log(4g-2).
\label{eq:log_bound}
\end{equation}
Here the upper bound follows from a simple area argument (see \cite{bs}). The first account of a lower bound is due to Buser in \cite{bu1}. Here an infinite sequence of surfaces with lower bound of order $\sqrt{\log(g)}$ is constructed. Using arithmetic surfaces it was then shown by Buser and Sarnak in \cite{bs} that there is a infinite sequence of genera $(g_k)_k$  with
\[
    \msy(g_k) \ge \frac{4}{3} \log(g_k)-c_0
\]
where $c_0$ is constant. This construction was then generalized in \cite{ksv1}, \cite{ksv2} and \cite{kksv}, where more families of compact hyperbolic Riemann surfaces satisfying the above inequality can be found. Asymptotically the factor $\frac{4}{3}$ is the best known to date and Makisumi (see \cite{ma}, \textbf{Theorem 1.6}) showed that, in some sense, this lower bound is optimal for the generalized Buser-Sarnak construction.\\
The study of surfaces whose systole length is a global or local maximum in the moduli space $\mathcal{M}_g$ of compact hyperbolic Riemann surfaces of genus $g \geq 2$ was initiated by Schmutz (see \cite{sc2}, \cite{sc1}  and \cite{sc4}). Here he also provides a number of interesting properties of these surfaces. The characterization of maximal surfaces was continued in  \cite{ba}, \cite{ak}, \cite{ge} and \cite{pa1}. Here it was shown that
\begin{itemize}
\item A (locally) maximal surface of genus $g$ has at least $6g-5$ systoles \cite{sc1}, \cite{ba}.
\item There is only a finite number of maximal surfaces of genus $g$ \cite{sc1}, \cite{ba}.
\item The systole function $\sy(\cdot)$ is a topological Morse function on the moduli space \cite{ak}.
\item All systoles of maximal surfaces are non-separating (see \cite{pa1}, \textbf{Claim} on p. 336).
\end{itemize}

An open question is, whether $\msy(\cdot)$ is a monotonously increasing function with respect to the genus. Though we can not prove or disprove this result, we can at least show the following intersystolic inequalities:
\begin{thm} Let $\msy(g)$ be the maximal value that $\sy(\cdot)$ can attain among the compact hyperbolic Riemann surfaces of genus $g \geq 2$.
\begin{enumerate}
\item $\msy(k(g-1)+1) > \msy(g)$ for $k \in \N \backslash \{0,1\}$.
\item $\msy(g+1) > \frac{\msy(g)}{2}.$
\item If $\msy(g_2) \geq \msy(g_1)$, then $\msy(g_1 +g_2 -1) > \min \{ \frac{\msy(g_2)}{2}, \msy(g_1)\}$.
\end{enumerate}
\label{thm:intermsys}
\end{thm}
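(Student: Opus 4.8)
The plan is to prove all three inequalities by the same cutting-and-pasting philosophy announced in the abstract: start from a (globally) maximal surface of the appropriate genus, cut it open along a suitable simple closed geodesic, and reglue copies to build a surface of the target genus on which one can control the length of the shortest simple closed geodesic from below. Throughout I would use the fact, cited in the introduction from \cite{pa1}, that all systoles of a maximal surface are non-separating; this guarantees that a systole can be cut to produce a surface with two boundary geodesics of the same length without disconnecting the surface, which is exactly what makes the pasting constructions available.

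\medskip

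For part (1) I would take a globally maximal surface $S_{max}$ of genus $g$ with $\sy(S_{max}) = \msy(g)$, pick a non-separating systole $\gamma$, and cut along it to obtain a surface $S'$ of genus $g-1$ with two boundary geodesics of length $\msy(g)$. I would then glue $k$ such cut copies in a cyclic chain (matching the first boundary of each copy to the second boundary of the next), producing a surface of genus $k(g-1)+1$. Because the gluing can be done with a suitable twist, every simple closed geodesic on the new surface either stays inside one copy of $S'$ (where by construction it is at least $\msy(g)$) or crosses a gluing curve; the crossing geodesics can be forced to be strictly longer than $\msy(g)$ by choosing the twist parameters and invoking a collar/length estimate. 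This yields $\sy > \msy(g)$ and hence $\msy(k(g-1)+1) > \msy(g)$, with the strictness coming from the fact that the minimal length is attained in the interior of the glued pieces but can be separated from $\msy(g)$.

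\medskip

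For parts (2) and (3) the natural move is to raise the genus by one by attaching a handle. For (2), starting from a maximal surface of genus $g$, I would cut along a non-separating systole to get genus $g-1$ with two boundary geodesics, then reglue a small handle (or, more carefully, build a genus $g+1$ surface out of a genus $g$ maximal piece plus an extra handle) so that any new short geodesic must traverse the attached region. The factor $\tfrac{1}{2}$ strongly suggests that the worst case is a geodesic that runs \emph{halfway} around a glued curve, i.e. a geodesic whose length is bounded below by half of $\msy(g)$ because it uses only one of the two matched boundary arcs; a collar-width computation around the gluing geodesic should produce exactly this lower bound. Part (3) is the common generalization: given two maximal surfaces of genera $g_1$ and $g_2$ with $\msy(g_2) \geq \msy(g_1)$, cut each along a non-separating systole and glue them together along a pair of matched boundaries to form a connected surface of genus $g_1 + g_2 - 1$. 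A geodesic confined to the $g_1$-piece has length at least $\msy(g_1)$, one confined to the $g_2$-piece at least $\msy(g_2)$, and a geodesic crossing the single gluing curve is again bounded below by $\tfrac{1}{2}\msy(g_2)$ by the same collar argument, giving the stated minimum.

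\medskip

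The main obstacle, and the step I expect to carry the real content, is the length estimate for geodesics that cross a gluing curve: one must show that after cutting along a geodesic of length $\ell$ and regluing (possibly with a twist), every simple closed geodesic meeting the new seam has length strictly bounded below by the claimed quantity, and in particular that the strict inequalities in (1) and the factor $\tfrac{1}{2}$ in (2) and (3) are genuinely achieved rather than just approached. This requires the standard collar lemma together with a careful trigonometric analysis of geodesics crossing a prescribed boundary geodesic in a hyperbolic surface, and the choice of twist parameter must be made explicit enough to guarantee that no short ``shortcut'' geodesic appears across the seam. Verifying that a crossing geodesic cannot dip below $\tfrac{1}{2}\msy(g_2)$ — and that an interior geodesic cannot become shorter than $\msy(g)$ after the topological surgery — is where the cutting-and-pasting bookkeeping must be done with care.
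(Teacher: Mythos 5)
Your overall cutting-and-pasting strategy matches the paper's, but there are three concrete gaps, and the first is fatal to your argument for the strict inequality in (1). You claim that, with suitable twists, the crossing geodesics ``can be forced to be strictly longer than $\msy(g)$'', yielding $\sy > \msy(g)$ on the glued surface. This is impossible: the seam curves are themselves simple closed geodesics of length exactly $\msy(g)$, so the glued surface always has systole $\leq \msy(g)$ (in fact $=\msy(g)$), no matter how the twists are chosen. The paper obtains strictness by a different mechanism: the construction works for \emph{every} choice of twist parameters, producing a continuous family $(S_t)_{t\in(-\frac{1}{2},\frac{1}{2}]^k}$ of genus-$(k(g-1)+1)$ surfaces all with systole exactly $\msy(g)$; since there are only finitely many maximal surfaces in each genus (Schmutz, Bavard), no member of this family can be maximal, and therefore $\msy(k(g-1)+1) > \sy(S_t) = \msy(g)$. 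Relatedly, your crossing estimate cannot rest on ``the standard collar lemma'': the standard collar has width $\arcsinh\left(1/\sinh(\alpha/2)\right)$, which tends to zero as $\alpha$ grows, while systoles of maximal surfaces are long (of order $\log g$). The paper first proves a collar lemma special to systoles (its Lemma 2.1: $\omega_\alpha > \alpha/4$), and it is this width, growing linearly in $\alpha$, that forces any geodesic crossing a seam to traverse two collars and hence have length at least $2\cdot 2\omega_\alpha > \alpha$.

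There are two further gaps in (2) and (3). In (2), attaching ``a small handle'' destroys the bound: a geodesic contained in the attached handle never meets the seam, so its length is controlled only by the handle's own geometry, and a small handle carries short geodesics. The paper instead glues in a specific F-piece (signature $(1,2)$, both boundary geodesics of length $\alpha$) whose \emph{interior systole is maximal}; by Schmutz's explicit trigonometric equation this interior systole exceeds $\alpha/2$, which is exactly what handles the case of geodesics confined to the attached piece. In (3), you propose to ``glue them together along a pair of matched boundaries'', but the boundaries cannot be matched: cutting $S^1_{max}$ and $S^2_{max}$ along their systoles produces boundary geodesics of the different lengths $\msy(g_1)$ and $\msy(g_2)$, and hyperbolic gluing requires equal boundary lengths. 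The paper resolves this with Parlier's comparison-surface theorem, replacing the cut genus-$g_1$ piece by a surface of the same signature whose boundary geodesics have length $\msy(g_2)$ while all of its interior geodesics remain longer than $\msy(g_1)$; this substitution is precisely where the minimum $\min\{\msy(g_2)/2,\, \msy(g_1)\}$ in the statement comes from.
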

We note that \textbf{Theorem \ref{thm:intermsys}-1} without a sharp inequality can be obtained by constructing a normal cover of a maximal surface in a similar fashion (see \textbf{Note \ref{thm:kcover_note}}). This is due to the well known fact that the injectivity radius does not decrease in a normal cover of a closed Riemannian surface.\\
The theorem is obtained by cutting and pasting maximal surfaces to construct compact hyperbolic Riemann surfaces with large systoles. Here the main tool is \textbf{Lemma \ref{thm:sys_coll}}, a collar lemma for systoles. As a result we obtain from \textbf{Theorem \ref{thm:intermsys}-1} or the covering construction:
If $S$ is a compact Riemann surface of genus $g$, such that $\sy(S) \ge \frac{4}{3} \log(g)-c_0$, then for $l=k\cdot (g-1)+1$
\[
      \msy(l) \ge \frac{4}{3} \log(l)- \left(\frac{4}{3} \log(k) +c_0 \right) =  \frac{4}{3} \log(l)- c(k) \text{ \ \ for all \ \ }    k \ll g.
\]
The concrete examples of genera $g$ for which $\msy(g) \ge \frac{4}{3} \log(g)-c_0$ is shown are sparse. The above inequality, however, suggests that at least a slightly lower bound holds for a large number of genera $g$.\\ 
Furthermore, by construction, we obtain a continuous parameter family $\left(S_t \right)_{t \in (-\frac{1}{2},\frac{1}{2}]^k}$ of compact hyperbolic Riemann surfaces of genus $k(g-1)+1$, such that
\[
      \sy(S_t) = \msy(g).
\]
This shows that though the systoles of these surfaces are large, none of these surfaces can be maximal, as this would be a contradiction to the finiteness of the number of these surfaces.\\
In the following table Tab.~\ref{tab:RS_large_sys} we give a summary of compact hyperbolic Riemann surfaces of genus $g \leq 25$ with maximal known systoles thus providing a reference and benchmark for further studies. Most of these are constructed using the examples presented in \cite{ca},\cite{ksv1}, \cite{sc1} and \cite{sc4} by applying \textbf{Theorem \ref{thm:intermsys}} or the covering argument. 

\begin{table}[htbp]
\begin{center}
\begin{tabular}{|c|c|c|c|c|c|}
\hline
\multicolumn{1}{|c|}{genus $g$} & \multicolumn{1}{|c|}{ \parbox[t]{4cm}{surface
(name and/or \\ constructed from)}} & \multicolumn{1}{|c|}{\parbox[t]{3cm}{(systole length)/ \\  $\log(g)$}} & \multicolumn{1}{|c|}{\parbox[t]{1cm}{systole \\ length}} & \multicolumn{1}{|c|}{$2\log(4g-2)$} &\multicolumn{1}{|c|} {reference} \\ \hline
2 & $M^*$ & 4.41 & 3.06 & 3.58 & \cite{sc1}(Bolza) \\
3 & $M(3)$ & 3.63 & 3.98 & 4.61 &\cite{sc1} (Wiman) \\
4 & $M(4)$ & 3.34 & 4.62 & 5.28 & \cite{sc1} \\
5 & $S_5$ & 3.05 & 4.91 & 5.78 & \cite{sc4} \\
6 & $I_6$ & 2.85 & 5.11 & 6.18 & \cite{ca} \\
7 & $H_7$ (via PSL(2,8)) & 2.98 & 5.80 & 6.52 & \cite{ksv1} \\
8 & $7 \times M^*$ & 1.47 & 3.06 & 6.80 & normal cover \\
9 & $2 \times S_5$ & 2.24 & 4.91 & 7.05 & normal cover \\
10 & $3 \times M(4)$ & 2.01 & 4.62 & 7.28 & normal cover \\
11 & $I(x|z)$ & 2.49 & 5.98 & 7.48 & \cite{sc1}\\
12 & $11 \times M^*$ & 1.23 & 3.06 & 7.66 & normal cover \\
13 & $M_{13}$ (via  $2 \times H_7$) & 2.26 & 5.80 & 7.82 & normal cover \\
14 & $H_{14}$ (via $PSL(2,13)$) & 2.61 & 6.89 & 7.98 & \cite{ksv1} \\
15 & $7 \times M(3)$ & 1.47 & 3.98 & 8.12 & normal cover \\
16 & $3 \times I_6$ & 1.84 & 5.11 & 8.25 & normal cover \\
17 & $H_{17}$ (via $(C_2)^3.PSL(2,7)$) & 2.69 & 7.61 & 8.38 & \cite{ksv1} \\
18 & via $H_{17}$ & 1.32 & 3.80 & 8.50 & Th. 1.1-2 \\
19 & $M_{19}$ (via $3 \times H_7$) & 1.97 & 5.80 & 8.61 & normal cover \\
20 & $H_{17},M(4)$ & 1.27 & 3.80 & 8.71 & Th. 1.1-3 \\
21 & $2 \times I(x|z)$ & 1.96 & 5.98 & 8.81 & normal cover \\
22 & $7 \times M(4)$ & 1.50 & 4.62 & 8.91 & normal cover \\
23 & $B_1$ & 2.04 & 6.39 & 9.00 & \cite{sc1} \\
24 & via $B_1$ & 1.01 & 3.19 & 9.09 & Th. 1.1-2 \\
25 & $2 \times M_{13}$ & 1.80 & 5.80 & 9.17 & normal cover \\ \hline
\end{tabular}
\end{center}
\caption{Compact hyperbolic Riemann surfaces with large systoles of genus $g \leq 25$.}
\label{tab:RS_large_sys}
\end{table}

\newpage
It follows furthermore from the known examples in genus $2$ and $3$ and \textbf{Theorem \ref{thm:intermsys}-1} or \textbf{Note \ref{thm:kcover_note}}: Let $\msy(g)$ be the maximal value that $\sy(\cdot)$ can attain among the compact hyperbolic Riemann surfaces of genus $g \geq 2$. Then
\begin{equation}
 3.06\simeq \msy(2)   <  \msy(g)  \text{ \ \ for all  } g\text{ \ and \ \ \ }  3.98 \leq \msy(3)   <  \msy(g) \text{ \ \ if }  g\text{\ odd. \ }
\label{eq:msys_general} 
\end{equation}
In Section 3 we use the same methods to derive similar inequalities for non-compact hyperbolic surfaces with \textit{cusps} (see \cite{bu2}, \textbf{Example 1.6.8} for an exact definition). Let $S$ be a hyperbolic Riemann surface of signature $(g,n)$, i.e. of genus $g$ and with $n$ cusps. Let $\msy(g,n)$ be the value
\[
     \msy(g,n) = \sup \{ \sy(S) \mid S \text{ non-compact hyperbolic Riemann surface of signature } (g,n)\}.
\]
As in the case of the compact hyperbolic surfaces this supremum is a maximum due to Mahler's compactness theorem. The known bounds for the value of $\msy(g,n)$ are the following: For $(g,n) \neq (0,3)$ and $n \geq 2$ 
\begin{equation}
      2 \arcsinh(1) \leq \msy(g,n) \leq 4 \log \left(\frac{12g-12+6n}{n}\right).
\label{eq:log_bound2}
\end{equation}
The upper bound follows again from an area argument (see \cite{sc2}), whereas the lower bound is due to the collar lemma for Riemann surfaces (see \cite{bu2}, \textbf{Theorem 4.1.1}). More refined estimates have been obtained recently in \cite{fp}.\\
Heuristically speaking, if we fix the genus $g$ and increase the number $n$ of cusps continuously, then some of the cusps must move closer together and there will be a simple closed geodesic of bounded length surrounding two or several cusps. Such a geodesic is separating the surface into two parts. Indeed it can be shown that:
\begin{itemize}
\item Let $S_{max}$ be a maximal hyperbolic Riemann surface of signature $(g,n)$, where $n \geq 25g$. Then  all systoles of $S_{max}$ are separating. (\cite{pa1}, \textbf{Proposition 3.1}).
\end{itemize}
This will be important in the statement of the following theorem.
\begin{thm} Let $S_{max}$ be a maximal non-compact hyperbolic Riemann surface of signature $(g,n) \neq (0,4)$, where $3g-3 + n > 0$ and $n \geq 2$ that has a separating systole. Let $\sy(S_{max})=\msy(g,n)$ be its systole length. Then $\msy(g,n) \geq 4\arcsinh(1)$. Furthermore
\begin{enumerate}
\item $\msy(2g, 2n-4) > \min\left\{\msy(g,n),\max\{5.276,\msy(g,n)-4\arcsinh \left(\frac{1}{\sinh(\frac{\msy(g,n)}{4})}\right)\}\right\}$.
\item $\msy(2g+1,n-4) > \frac{\msy(g,n)}{2}.$
\item $\msy(g+1,n-2) > \frac{\msy(g,n)}{3}.$
\end{enumerate}
\label{thm:intermsys2}
\end{thm}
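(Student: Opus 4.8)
The plan is to mirror the cutting and pasting strategy behind \textbf{Theorem \ref{thm:intermsys}}, now organized around the \emph{separating} systole $\gamma$ of length $\ell := \msy(g,n)$ guaranteed by the hypothesis. Since $\gamma$ is separating and shortest (so that no single cusp can be enclosed, a loop around one cusp not being a geodesic), $\gamma$ bounds on one side a subsurface $P$ whose simplest instance is a pair of pants carrying two cusps, and on the other side a subsurface $Q$ of signature $(g,n-2)$ with the single geodesic boundary $\gamma$. I would first prove the lower bound $\ell \geq 4\arcsinh(1)$ by applying the collar lemma for systoles (\textbf{Lemma \ref{thm:sys_coll}}) to $\gamma$ and combining it with the geometry of the enclosed cusp regions in $P$: the threshold $\ell/2 \geq 2\arcsinh(1) = \arccosh(3)$ is exactly the trigonometric identity producing the constant, and a strictly shorter $\gamma$ would force a simple closed geodesic that is \emph{not} separating, contradicting the standing hypothesis that every systole of $S_{max}$ separates.

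For the three inequalities I would construct an explicit competitor surface of the target signature and then invoke maximality, $\msy(\text{target}) \geq \sy(\text{competitor})$, exactly as in \textbf{Theorem \ref{thm:intermsys}}. For part 3, I cut $S_{max}$ along $\gamma$ and replace the two cusps enclosed by $P$ with a hyperbolic handle, gluing a tube across the two truncated cusp ends; this raises the genus by one and lowers the cusp number by two, producing signature $(g+1,n-2)$ while keeping $\gamma$ as an interior geodesic of length $\ell$. For parts 1 and 2 I instead take two copies of the piece $Q$ (respectively two copies of $S_{max}$) and reglue them along $\gamma$, and, where the target cusp count demands it, convert surplus cusps into handles by the same tube construction; matching the signatures $(2g,2n-4)$ and $(2g+1,n-4)$ fixes how many boundaries are pasted and how many cusp pairs are tubed.

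The main obstacle, and the heart of the argument, is to bound from below the lengths of the \emph{new} simple closed geodesics, namely those that did not survive from $S_{max}$: the curves crossing the pasting locus $\gamma$ in parts 1 and 2, and the curves winding around the handle that replaces the two cusps in part 3. Every such geodesic must traverse the collar of $\gamma$, so \textbf{Lemma \ref{thm:sys_coll}} controls it: a curve crossing $\gamma$ has length at least $\ell$ diminished by a correction of the form $4\arcsinh\!\left(\tfrac{1}{\sinh(\ell/4)}\right)$ coming from the half-collar widths, which is precisely the expression appearing in part 1, and optimizing the twist and the modulus of each tube maximizes this shortest new geodesic. Since the surviving geodesics keep length $\geq \ell$, the systole of the competitor is at least the minimum of $\ell$ and this controlled quantity, yielding the bounds $\ell/2$, $\ell/3$, and the $\min/\max$ expression (the floor $5.276$ guaranteeing a usable estimate when $\ell$ is close to $4\arcsinh(1)$).

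Finally I would upgrade each estimate to a strict inequality. As in the remarks following \textbf{Theorem \ref{thm:intermsys}}, the competitor is never itself maximal, since the pasting and tubing leave free twist and length parameters and thereby a continuous family realizing the bound, so the genuine maximum $\msy(\text{target})$ strictly exceeds it. The step I expect to be most delicate is the length bound for geodesics crossing $\gamma$ in part 1: there the reglued curve $\gamma$ may itself be shortened by the identification, so the $\min/\max$ form is needed to weigh the (possibly reduced) length of $\gamma$ against the crossing geodesics, and the constant $5.276$ has to be pinned down by an explicit pair-of-pants (right-angled hexagon) computation rather than by the collar lemma alone.
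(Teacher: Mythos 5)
Your overall strategy (cut along the separating systole, build competitor surfaces by regluing, control the new geodesics via collars, and get strictness from free twist parameters) is the right one, and your part~1 construction (doubling the complement of the pair of pants along $\gamma$) coincides with the paper's. But there are genuine gaps. First, you assert that the separating systole bounds a pair of pants with two cusps; this does not follow from ``no single cusp can be enclosed'' --- a separating systole could a priori split $S_{max}$ into two pieces each of positive genus with several cusps. The paper needs two nontrivial inputs to get this: (i) since maximal surfaces are isolated in moduli space, a separating systole of $S_{max}$ must be \emph{intersected by another systole} (otherwise twisting along it would produce a continuous family of maximal surfaces), and (ii) by Fanoni--Parlier, two intersecting systoles on a cusped surface intersect exactly twice, and then one of them bounds two cusps. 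The bound $\msy(g,n) \geq 4\arcsinh(1)$ is likewise the Gauglhofer--Semmler consequence of this double intersection; it cannot be extracted from Lemma \ref{thm:sys_coll} together with your reading of the hypothesis that \emph{every} systole separates --- the hypothesis is only that \emph{some} systole separates. Second, you invoke Lemma \ref{thm:sys_coll} to control geodesics crossing the pasting locus, but that lemma is proved for \emph{compact} surfaces; on cusped surfaces its proof breaks down exactly when the curve $\alpha' \cdot \delta$ degenerates to a cusp, which is why the paper proves a separate half-collar lemma (Lemma \ref{thm:sys_coll_cusp}) whose Case~2 --- degenerate pentagons and quadrilaterals --- is the source of the correction term $\arcsinh\left(\frac{1}{\sinh(\alpha/4)}\right)$ and of the constant $1.319$ (whence $5.276$). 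This is the technical heart of the theorem and is missing from your proposal.

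Third, your constructions for parts~2 and~3 (``truncate the two cusps and glue a tube across them'') are not well-defined hyperbolic operations --- a cusp has no geodesic boundary to truncate to and glue along --- and, more importantly, they provide no mechanism for the factors $\frac{1}{2}$ and $\frac{1}{3}$. In the paper these come from pasting onto the cut surface, along its existing geodesic boundary of length $\alpha$, explicit extremal pieces: for part~2 an F-piece of signature $(1,2)$ whose maximal interior systole satisfies Schmutz's cubic equation and hence exceeds $\alpha/2$, and for part~3 a Q-piece of signature $(1,1)$ with $\cosh\left(\frac{\sy(Q_{max})}{2}\right) = \cosh\left(\frac{\alpha}{6}\right)+\frac{1}{2}$, hence interior systole greater than $\alpha/3$. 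Optimizing ``the twist and the modulus of each tube,'' as you propose, cannot recover these constants; they are facts about extremal one-holed and two-holed tori, not about collars.
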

Here the lower bound $\msy(g,n) \geq 4\arcsinh(1)$ is due to $\cite{gs}$. We furthermore construct normal covers of maximal non-compact hyperbolic surfaces of signature $(g,n)$ and show that the systole length does not decrease in these surfaces. This enables us to prove: 
\begin{thm} Let $\msy(g,n)$ be the maximal value that $\sy(\cdot)$ can attain among the non-compact hyperbolic Riemann surfaces of signature $(g,n)$, where $3g-3 + n > 0$ and $g \geq 1$. Then
$$\msy(k(g-1)+1, k n) \geq \msy(g,n) \text{ \ \ for \ \  } k \in \N \backslash \{0,1\}$$.
\label{thm:intermsys3}
\end{thm}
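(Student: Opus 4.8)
The plan is to realize a surface of the target signature $(k(g-1)+1,\,kn)$ as a regular cover of a maximal surface of signature $(g,n)$, exploiting the fact that a covering map between hyperbolic surfaces is a local isometry and therefore cannot decrease the systole. First I would invoke Mahler's compactness theorem (as cited in the excerpt) to fix a maximal surface $S_{max}$ of signature $(g,n)$ with $\sy(S_{max})=\msy(g,n)$. Because $n\geq 1$, the fundamental group $\pi_1(S_{max})$ is free on the standard generators $a_1,b_1,\dots,a_g,b_g,c_1,\dots,c_n$ subject to the surface relation $\prod_{i}[a_i,b_i]\prod_{j}c_j=1$, where the $c_j$ are the peripheral loops encircling the cusps. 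Since $g\geq 1$, I can define a homomorphism $\phi\colon \pi_1(S_{max})\to \Z/k\Z$ that sends the handle generator $a_1$ to a generator of $\Z/k\Z$ and sends every other standard generator, in particular every peripheral loop $c_j$, to $0$. The relation then forces the product of all $c_j$ into the kernel automatically, so all $n$ parabolic conjugacy classes lie in $\ker\phi$, while surjectivity of $\phi$ guarantees that the associated cover is connected.

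Next I would let $\pi\colon \tilde S\to S_{max}$ be the regular degree-$k$ cover determined by $\ker\phi$; this cover is normal because $\Z/k\Z$ is abelian. Since every peripheral loop lies in $\ker\phi$, the cover splits completely over each cusp, so each of the $n$ cusps of $S_{max}$ lifts to exactly $k$ cusps and $\tilde n=kn$. Multiplicativity of the Euler characteristic, $\chi(\tilde S)=k\,\chi(S_{max})$, combined with $\chi=2-2g-n$ then gives $2-2\tilde g-kn=k(2-2g-n)$, whence $\tilde g=k(g-1)+1$. Thus $\tilde S$ has exactly signature $(k(g-1)+1,\,kn)$, and equipped with the pulled-back hyperbolic metric it is a hyperbolic surface on which $\pi$ is a local isometry.

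It then remains to compare systoles. A systole $\tilde\gamma$ of $\tilde S$ is a closed geodesic, and because $\pi$ is a local isometry its image $\pi\circ\tilde\gamma$ is again a closed geodesic of $S_{max}$ of exactly the same length. Using the classical fact that a shortest closed geodesic on a hyperbolic surface is automatically simple, every closed geodesic of $S_{max}$ has length at least $\sy(S_{max})$; hence $\sy(\tilde S)=\ell(\tilde\gamma)=\ell(\pi\circ\tilde\gamma)\geq \sy(S_{max})=\msy(g,n)$. Since $\tilde S$ is a surface of signature $(k(g-1)+1,\,kn)$, this yields $\msy(k(g-1)+1,\,kn)\geq\msy(g,n)$, which is the claim.

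I expect the main obstacle to be arranging the cover to split completely over every cusp, so that $\tilde n=kn$ holds exactly rather than some cusps fusing into longer covers; this is precisely the point at which the hypothesis $g\geq 1$ is essential, since for $g=0$ the peripheral loops generate $\pi_1$ and no nontrivial homomorphism can annihilate all of them. A secondary point requiring care is the justification that projection does not shorten the systole: one must note that $\pi\circ\tilde\gamma$ may be non-simple or a multiple of a primitive geodesic, and invoke that any closed geodesic, simple or not, has length at least $\sy(S_{max})$.
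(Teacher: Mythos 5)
Your proof is correct and follows essentially the same route as the paper: the paper's Lemma 3.3 likewise realizes a surface of signature $(k(g-1)+1,kn)$ as a connected normal cover of order $k$ of the maximal surface, and proves $\sy(\tilde S)\geq \sy(S_{max})$ by exactly your closing argument---a systole of $\tilde S$ projects to a smooth closed geodesic of the same length in $S_{max}$, which cannot be peripheral and hence has length at least $\sy(S_{max})$ (the injectivity-radius argument used in the compact case being unavailable because of the cusps). The only difference is how the cover is produced: the paper cuts $S_{max}$ along a non-separating simple closed geodesic and glues $k$ copies cyclically with matching twist parameters, whereas you construct the same kind of cyclic cover algebraically via a surjection $\pi_1(S_{max})\to \Z/k\Z$ killing all peripheral loops, computing the signature by Euler characteristic; both constructions need $g\geq 1$ for the same reason you identify.
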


\section*{Acknowledgment}
The second author has been supported by the Alexander von Humboldt foundation. We would like to thank Guillaume Bulteau for helpful discussions. We would like to thank the anonymous referee from JGEOM for his constructive comments which substantially helped to improve the quality of the article.

\section{Construction of compact hyperbolic Riemann surfaces with large systoles from maximal surfaces}

By abuse of notation we will denote the length of a geodesic arc by the same letter as the arc itself in the following sections. In this section we treat the case of compact hyperbolic surfaces and a surface of signature $(g,n)$ is a compact surface of genus $g$ with $n$ disjoint boundary components, each of which is a smooth simple closed geodesic. We will first prove a collar lemma for systoles. Then we will present the construction of a certain normal cover of Riemann surfaces. In this cover the injectivity radius does not decrease. Finally this construction will be used to show \textbf{Theorem \ref{thm:intermsys}}.\\ 
\\
Let $\eta$ be a simple closed geodesic on $S$. Let $\omega_{\eta}$ be the supremum of all $w$, such that the geodesic arcs of length $w$ emanating  perpendicularly from  $\eta$ are pairwise disjoint. We define a \textit{collar} $C_w(\eta)$ around $\eta$ of width $w<\omega_{\eta}$ by
\[
C_w(\eta)=\left\{p \in M \mid \dist(p,\eta) < w \right\}.
\]
We call a collar $C_w(\eta)$ of width  $w=\omega_{\eta}$ the \textit{maximal collar} of $\eta$. We call a \textit{half-collar} $H_w(\eta)$ of width $w$ one of the two parts of a collar $C_w(\eta)$ that we obtain by cutting $C_w(\eta)$ along $\eta$.\\
We first show the following collar lemma for systoles of compact surfaces.
\begin{lem} Let $\alpha$ be a systole of a compact hyperbolic Riemann surface $S$ of genus $g \geq 2$. Then the maximal collar $C_{\omega_{\alpha}}(\alpha)$ of $\alpha$ has width
     $$ \omega_{\alpha} > \frac{\alpha}{4}.$$
\label{thm:sys_coll}
\end{lem}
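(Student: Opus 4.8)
The plan is to reduce the statement to a fact about the shortest geodesic arc meeting $\alpha$ orthogonally at both endpoints, and then to exploit that $\alpha$ is a systole. First I would pass to the universal cover $\mathbb{H}$ and fix a lift $L$ of $\alpha$, a complete geodesic whose stabiliser in $\pi_1(S)$ is generated by a hyperbolic isometry $A$ of translation length $\alpha$. Since $\alpha$ is simple, any two distinct lifts of $\alpha$ are disjoint, so $\dist(L,gL)>0$ whenever $gL\neq L$. The maximal collar $C_w(\alpha)=\{p:\dist(p,\alpha)<w\}$ is embedded precisely as long as the $w$-neighbourhoods of distinct lifts stay disjoint, i.e.\ as long as $2w\le\dist(L,gL)$ for every $g$ with $gL\neq L$. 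Hence $\omega_{\alpha}=\tfrac12\min_{gL\neq L}\dist(L,gL)$, and the common perpendicular realising this minimum projects to a geodesic arc $\delta$ on $S$ that meets $\alpha$ orthogonally at both of its endpoints and has length $\delta=2\omega_{\alpha}$.

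Next I would turn $\delta$ into a competitor for the systole. Let $p_1,p_2$ be the feet of $\delta$ on $\alpha$, and let $\alpha_0$ be the shorter of the two subarcs of $\alpha$ joining them, so that $\alpha_0\le\tfrac{\alpha}{2}$. I claim the concatenation $c=\delta\cup\alpha_0$ is a simple essential closed curve. It is essential because a lift of $c$ runs from $L$ to the disjoint lift $gL$, so the corresponding element of $\pi_1(S)$ does not fix $L$ and is in particular non-trivial. It is simple because the common perpendicular lies in the strip between $L$ and $gL$ and, being globally shortest, crosses no lift of $\alpha$ in its interior; thus the interior of $\delta$ is disjoint from $\alpha$ and meets $\alpha_0$ only at $p_1,p_2$. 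Consequently $c$ is freely homotopic to a unique simple closed geodesic $\gamma$.

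Finally I would compare lengths. As $\gamma$ is a simple closed geodesic and $\alpha$ is a systole, $\gamma\ge\alpha$. On the other hand $c$ is only piecewise geodesic, with genuine corners of angle $\tfrac{\pi}{2}$ at $p_1$ and $p_2$ (at $p_1$ alone if $p_1=p_2$), so its geodesic representative is strictly shorter: $\gamma<c=\delta+\alpha_0\le 2\omega_{\alpha}+\tfrac{\alpha}{2}$. Combining the two inequalities yields $\alpha<2\omega_{\alpha}+\tfrac{\alpha}{2}$, that is $\omega_{\alpha}>\tfrac{\alpha}{4}$, as claimed.

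The routine trigonometry is negligible here; the real work lies in the first two steps. The main obstacle I anticipate is making the identification $\omega_{\alpha}=\tfrac12\min_{gL\neq L}\dist(L,gL)$ airtight against the definition of the maximal collar through perpendicular arcs, and rigorously excluding self-intersections of $c$, especially in the degenerate case $p_1=p_2$ where $c=\delta$ is a geodesic loop. Once $c$ is known to be simple and essential with at least one genuine corner, the systole minimality closes the argument immediately.
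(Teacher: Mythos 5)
Your proposal is correct and follows essentially the same route as the paper's own proof: the paper likewise takes the smooth geodesic arc $\delta$ of length $2\omega_{\alpha}$ meeting $\alpha$ perpendicularly at both ends (arising from the self-intersection of the maximal collar), concatenates it with the shorter subarc $\alpha'\leq\frac{\alpha}{2}$ of $\alpha$, and compares the closed geodesic $\beta$ in the free homotopy class of $\alpha'\cdot\delta$ with the systole via $\beta<\alpha'+\delta\leq\frac{\alpha}{2}+2\omega_{\alpha}$. Your universal-cover formulation (identifying $2\omega_{\alpha}$ with the minimal distance between distinct lifts of $\alpha$, and using it to justify essentiality and simplicity of the concatenated curve) only makes rigorous the steps the paper asserts without proof, so the two arguments are the same in substance.
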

\begin{proof} Let $\alpha$ be a systole of a compact hyperbolic Riemann surface $S$ of genus $g \geq 2$. The closure $\overline{C_{\omega_{\alpha}}(\alpha)}$ of the maximal collar of $\alpha$  self-intersects in a point $p$. There exist two geodesic arcs $\delta'$ and $\delta''$ of length $\omega_{\alpha}$ emanating from $\alpha$ and perpendicular to $\alpha$ having the endpoint $p$ in common. These two arcs form a smooth geodesic arc $\delta$. The endpoints of $\delta$ on $\alpha$ divide $\alpha$ into two parts. We denote these two arcs by $\alpha'$ and $\alpha''$. Let without loss of generality $\alpha'$ be the shorter arc of these two. We have that
\[
   \alpha' \leq \frac{\alpha}{2}.
\]
Let $\beta$ be the simple closed geodesic in the free homotopy class of $\alpha'\cdot \delta$. We have that
\[
   \beta < \alpha' + \delta \leq \frac{\alpha}{2} + 2 \omega_{\alpha}.
\]
Now if $\omega_{\alpha} \leq  \frac{\alpha}{4}$ then it follows from this inequality that $ \beta < \alpha$. A contradiction to the minimality of $\alpha$.
\end{proof}

Hence each systole $\alpha$ in a compact hyperbolic Riemann surface $S$ has a collar $C_{\frac{\alpha}{4}}(\alpha)$ of width $\frac{\alpha}{4}$ which is embedded in $S$. From this fact we also obtain an upper bound for the length of a systole via an area argument. However, this estimate is not better than the one given in the introduction (see inequality (\ref{eq:log_bound})).\\
The following construction of a normal cover can be used to provide a version of \textbf{Theorem \ref{thm:intermsys}-1}, where the inequality is not sharp. Even if the proof of this note is well known, we include it here for better comprehension of the following constructions. 
\begin{nte} Let $S$ be a compact hyperbolic Riemann surface of genus $g \geq 2$. There is a normal cover $\tilde{S}$ of order $k$ and of genus $k(g-1)+1$, such that 
$$\sy(\tilde{S}) \geq \sy(S).$$
\label{thm:kcover_note}
\end{nte}
\begin{figure}[h!]
\SetLabels
\L(.30*.86) $S_1^c$\\
\L(.68*.86) $S_2^c$\\
\L(.68*.11) $S_3^c$\\
\L(.30*.11) $S_4^c$\\
\L(.25*.49) $\alpha^{1}_2 \sim \alpha^4_1$\\
\L(.46*.84) $\alpha^{1}_1 \sim \alpha^2_2$\\
\L(.67*.49) $\alpha^{2}_1 \sim \alpha^3_2$\\
\L(.46*.12) $\alpha^{4}_2 \sim \alpha^3_1$\\
\endSetLabels
%\ShowGrid
\AffixLabels{%
\centerline{%
\includegraphics[height=8cm,width=8cm]{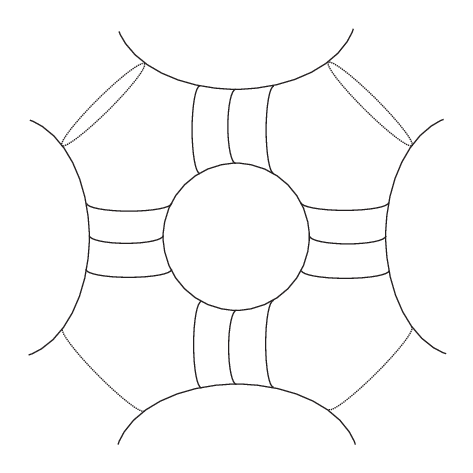}}}
\caption{Four copies $\left(S_i^c\right)_{i=1,..,4}$ of $S^c$ with identified boundaries.}
\label{fig:paste}
\end{figure}
\begin{proof}
Let $S$ be a  compact hyperbolic Riemann surface of genus $g \geq 2$ and let $\alpha$ be a non-separating simple closed geodesic of $S$. Let $\mathcal{P}$ be a partition of $S$ into surfaces of signature $(0,3)$, or \textit{Y-pieces}, such that $\alpha$ is the boundary curve of a Y-piece. Let $FN_S$ be a set of \textit{Fenchel-Nielsen coordinates} corresponding to $\mathcal{P}$ (see \cite{bu2}, p. 27-30) and let $tw_\alpha$ be the \textit{twist parameter} at $\alpha$.\\ 
To prove our note we construct a new surface $\tilde{S}$ of genus $k(g-1)+1$ from $S$ such that
\[
\sy(\tilde{S}) \geq \sy(S).
\]
To this end we cut open $S$ along $\alpha$ and call the surface obtained this way $S^c$. The signature of $S^c$ is $(g-1,2)$. Let $\alpha^c_1$ and $\alpha^c_2$ be the two boundary geodesics of $S^c$. We recall that  by gluing $S^c$ with twist parameter $tw_\alpha$ along $\alpha^c_1$ and $\alpha^c_2$ we obtain $S$.\\ 
Take $k$ copies $\left(S_i^c\right)_{i=1,..,k}$ of the surface $S^c$ and let $\alpha^i_1$ and $\alpha^i_2$ be the boundary geodesics of $S_i^c$, such that $\alpha^i_1$ is the copy of $\alpha^c_1$ and $\alpha^i_2$ is the copy of $\alpha^c_2$. We identify the boundaries of the different $\left(S_i^c\right)_{i=1,..,k}$ in the following way
\begin{equation}
    \alpha^{k}_1 \sim \alpha^1_2  { \ \ and \ \ } \alpha^{i}_1 \sim \alpha^{i+1}_2  \text{ \ for \ } i=1,...,k-1
\label{eq:paste1}
\end{equation}
choosing the $k$ twist parameters $(t_j)_{j=1,..,k}$, such that
\[
          t_j = tw_\alpha \text{ \ \ for all \ \ } k \in \{1,..,k\}. 
\]
We denote the surface of genus $k(g-1)+1$ obtained according to this pasting scheme as
\[
    \tilde{S}= S_1^c + S_2^c + ... + S_k^c~\mod (\ref{eq:paste1}) \text{ \ (see Fig.~\ref{fig:paste})}.
\]
We denote by $\alpha_i$ the image of $\alpha^{i}_1 \subset S_i^c$ in $\tilde{S}$. For $i \in \{1,..,k\}$, let
\[
\phi_i: S_i^c \backslash \partial S_i^c  \rightarrow S_{i+1 \mod k}^c \backslash \partial S_{i+1 \mod k}^c
\]
be the natural isometry on the interior of the $S_i^c$. As all twist parameters in the pasting are chosen to be $tw_\alpha$ the $(\phi_i)_{i=1,..,k}$ can be extended to an isometry $\tilde{\phi}:\tilde{S} \rightarrow \tilde{S}$ and we have that 
\[
    \tilde{S}~\mod\tilde{\phi}  \simeq S.
\] 
Then $\langle \tilde{\phi} \rangle$ is the cyclic group of order $k$ generated by $\tilde{\phi}$ and $\tilde{S}$ is a normal cover of $S$.\\
Now it is well known that the injectivity radius $r_{inj}(M)$ of a closed Riemannian surface $M$ does not decrease in a normal cover. But in the case of a compact hyperbolic Riemann surface $S'$ we have by \cite{bu2}, \textbf{Theorem 4.1.5} that $r_{inj}(S') = \frac{\sy(S')}{2}$. Hence 
\[
    \sy(\tilde{S}) = 2r_{inj}(\tilde{S}) \geq 2r_{inj}(S) = \sy(S).
\]
This concludes the proof of our note.     
\end{proof}

\textbf{proof of Theorem \ref{thm:intermsys}} \\
\\
\textit{1. $\msy(k(g-1)+1) > \msy(g)$.}\\
\\
Let $S_{max}$ be a globally maximal compact hyperbolic Riemann surface of genus $g \geq 2$. To prove the first part of the theorem we construct a new surface $S'$ of genus $k(g-1)+1$ from $S_{max}$ such that
\[
\msy(g) = \sy(S_{max}) = \sy(S').
\]
Let $\alpha$ be a systole of $S_{max}$. As all systoles of maximal surfaces are non-separating (see \cite{pa1}, \textbf{Claim} on p. 336), $\alpha$ is a non-separating simple closed geodesic. We proceed as in the proof of \textbf{Note \ref{thm:kcover_note}}, setting $S_{max}=S$ and  $S'=\tilde{S}$. However, this time we choose the $k$ twist parameters 
\[
 (t_j)_{j=1,..,k}, \text{ \ \ \ } t_j  \in (-\frac{1}{2},\frac{1}{2}]
\]
in the pasting (\ref{eq:paste1}) freely  to obtain the closed surface $S'$ of genus $k(g-1)+1$. Recall that $\alpha_i$ is the image of $\alpha^{i}_1 \subset S_i^c$ in $S'$. Due to our construction and \textbf{Lemma \ref{thm:sys_coll}} each $\alpha_i$ has an embedded collar $C_i$ of width $\frac{\alpha}{4}$ (see Fig.~\ref{fig:paste}).\\
We now show that any simple closed geodesic $\eta$ in $S'$ has length bigger than or equal to $\alpha$.
Therefore we distinguish two cases: either $\eta$ intersects at least one of the $(\alpha_i)_{i=1,..,k}$ transversally or not.\\
Consider the first case. If $\eta$ intersects one of the $(\alpha_i)_{i=1,..,k}$, say $\alpha_j$, then, due to the geometry of hyperbolic cylinders, $\eta$ traverses $C_j$. Now, to be a closed curve $\eta$ has to traverse at least two times the same cylinder $C_j$ or has to traverse $C_j$ and a different cylinder $C_l$. Therefore its length is bigger than $2 \cdot 2\omega_{\alpha} = 2 \cdot \frac{\alpha}{2} = \alpha$. In this case we have that
\[
    \eta \geq \alpha 
\]
which implies that $\sy(S')= \alpha$.\\
In the second case, a simple closed geodesic that intersects none of the $(\alpha_i)_{i=1,..,k}$ transversally is either contained in the interior of one of the $\left(S_i^c\right)_{i=1,..,k}$ or is one of the $(\alpha_i)_{i=1,..,k}$. In any case we have that
\[
    \eta \geq \alpha 
\]
thus $\sy(S') = \alpha$.\\
Hence in any case we have that $\msy(g) = \sy(S_{max}) =  \alpha = \sy(S')$.\\
 Setting the twist parameters $t=(t_1,..,t_k)$ arbitrarily, we see that $S'$ belongs to a continuous family $\left(S_t \right)_{t \in (-\frac{1}{2},\frac{1}{2}]^k}$ of compact hyperbolic Riemann surfaces of genus $k(g-1)+1$, such that
\[
      \sy(S_t) = \msy(g).
\]
This shows that, though the systoles of the surfaces in this family are large, these can not be maximal, as this would be a contradiction to the finiteness of the number of these surfaces. Hence $\msy(k(g-1)+1)>\sy(S_t)=\msy(g)$.\\
\\
\textit{2. $\msy(g+1) > \frac{\msy(g)}{2}$.}\\
\\
We call a surface of signature $(1,2)$ a \textit{F-piece}. Let $b_1$ and $b_2$ be the boundary geodesics of a F-piece. Let $F_{max}$ be a surface whose interior systole has maximal length among all F-pieces with boundaries $b_1$ and $b_2$ of equal length
$$b_1=b_2=b.$$
Denote by $s=\sy(F_{max})$ the length of the interior systole of $F_{max}$. It was shown in \cite{sc1}, \textbf{Theorem 5.1.} that
\begin{equation}
 2\cosh(\frac{s}{2})^3 - 3\cosh(\frac{s}{2})^2 - (\cosh(\frac{b}{2}) + 1)\cosh(\frac{s}{2}) - \cosh(\frac{b}{2})=0.    
\label{eq:sys_F}
\end{equation}
This implies that $\sy(F_{max}) = s > \frac{b}{2}$.\\
To prove the second part of the theorem we construct a new surface $S''$ of genus $g+1$ from a globally maximal surface $S_{max}$ of genus $g \geq 2$ and $F_{max}$ such that
\[
 \sy(S'') \geq \frac{\sy(S_{max})}{2}.
\]
To construct $S''$, we take the surface $S^c$ of signature $(g-1,2)$ obtained by cutting open $S_{max}$ along a systole $\alpha$ and paste the surface $F'_{max}$ with boundary geodesics of length $\alpha$ along the two boundary geodesics of $S^c$.\\
We denote by $\alpha_1$ and $\alpha_2$ the image of the two boundary geodesics of the embedded F-piece in $S''$ which is isometric to $F'_{max}$. Due to our construction each $\alpha_i$ has an embedded half-collar $H_i \subset S^c$ of width $\frac{\alpha}{4}$.\\
We now show that any simple closed geodesic $\eta$ in $S''$ has length bigger than  $\frac{\alpha}{2}$.\\
Therefore we distinguish two cases: either $\eta$ intersects either $\alpha_1$ or $\alpha_2$ transversally or not. Consider the first case. If $\eta$ intersects one of the $(\alpha_i)_{i=1,2}$, say $\alpha_1$, transversally, then, due to the geometry of the half-collar, $\eta$ traverses $H_1$. Furthermore $\eta$ has to traverse at least two times the same half-collar $H_1$ or has to traverse $H_1$ and $H_2$ to be a closed curve. In this case its length is bigger than or equal to $2 \cdot \omega_{\alpha} = 2 \cdot \frac{\alpha}{4} = \frac{\alpha}{2}$. Hence in this case we obtain
\[
    \eta \geq \frac{\alpha}{2}
\]
and therefore $\sy(S'') \geq \frac{\alpha}{2}$. (In fact this inequality is strict, because the part of $\eta$ in the F-piece must have strictly positive length.)\\
We now consider the second case. Any simple closed geodesic that intersects neither $\alpha_1$ nor $\alpha_2$ transversally is either contained in the interior of $S^c$, the interior of $F'_{max}$ or is one of the $(\alpha_i)_{i=1,2}$. Here we obtain the lower bound for $\eta$ from the lower bound on $\sy(F'_{max})$. It follows from Equation (\ref{eq:sys_F}) that
\[
    \eta > \frac{\alpha}{2}  
\]
thus $\sy(S') \geq \frac{\alpha}{2}$.\\
In any case we obtain that $\msy(g+1) > \sy(S'') \geq \frac{\sy(S_{max})}{2} = \frac{\msy(g)}{2}$.\\
As in the previous part, the inequality in \textbf{Theorem \ref{thm:intermsys}-2} is strict due to the fact that the construction does not depend on the twist parameters.\\
\\
\textit{3. If $\msy(g_2) \geq \msy(g_1)$, then $\msy(g_1 +g_2 -1) > \min \{ \frac{\msy(g_2)}{2}, \msy(g_1)\}$.}\\
\\
To prove the final statement we take two maximal surfaces $S^1_{max}$ and $S^2_{max}$ of genus $g_1$ and $g_2$, respectively. For $i \in \{1,2\}$ we cut $S^i_{max}$ open along a systole $\alpha^i$ and call the surface obtained in this way $S^{ci}$. As $\msy(g_2) \geq  \msy(g_1)$ we have that $\alpha^2 \geq \alpha^1$.\\

Now, we can not directly paste these surfaces together, as the boundary length is different. However, by \cite{pa2}, \textbf{Theorem 1.1.}, we can construct a comparison surface $S^{p1}$ for $S^{c1}$ of signature $(g_1-1,2)$, such that
\begin{itemize}
\item All interior geodesics of $S^{p1}$ are longer than $\alpha^1$.
\item The boundary geodesics $\gamma_1$ and $\gamma_2$  of $S^{p1}$ have length $\alpha^2$.
\end{itemize}
We identify the open boundaries of $S^{p1}$ and $S^{c2}$ to obtain the surface $S^{12}$ of genus $g_1 + g_2 -1$. Furthermore the two boundary geodesics $\alpha^2_1$ and $\alpha^2_2$ of $S^{c2}$ have both an embedded half-collar of width $\frac{\alpha^2}{4}$ in $S^{c2}$. Due to the properties of $S^{12}$ we can apply similar arguments as in the previous case of the surface $S''$ to show that
\[
   \msy(g_1+g_2 -1) >  \sy(S^{12}) \geq \min \{ \frac{\alpha^2}{2}, \alpha^1 \} = \min \{ \frac{\msy(g_2)}{2}, \msy(g_1) \}.
\]
This concludes the proof of \textbf{Theorem \ref{thm:intermsys}}. \hfill  $\square$ 

\section{Construction of hyperbolic Riemann surfaces with cusps with large systoles}

In this section we denote by a hyperbolic (Riemann) surface $S$ of signature $(g,n)$ a non-compact hyperbolic Riemann surface of genus $g$ with $n$ cusps and assume that $3g-3+n > 0$. A surface with a cusp can also be interpreted as a surface with a degenerated boundary of length zero. In view of this we will denote in this section by a hyperbolic surface $S$ of signature $(g,n,k)$ a surface of genus $g$ with $n$ disjoint boundary components, of which $k$ are cusp points at infinity and of which $n-k$ are smooth simple closed geodesics.\\
\\           
To prove \textbf{Theorem \ref{thm:intermsys2}}, we will first show a collar lemma for separating systoles of hyperbolic surfaces with cusps. More specifically, we show that a systole with a large collar exists if a separating systole is intersected by another systole. Contrary to the case of the compact hyperbolic surfaces, this collar lemma applies only in this special situation. With the help of this lemma we then prove \textbf{Theorem \ref{thm:intermsys2}}. Then we explain shortly why no such strong collar lemma should hold if the systole of hyperbolic surface with cusps is non-separating. Finally we prove \textbf{Theorem \ref{thm:intermsys3}} using the construction of a normal cover from Section 2.  
\begin{lem} Let $S$ be a non-compact hyperbolic Riemann surface of signature $(g,n)\neq (0,4)$, where $3g-3+n > 0$ and $n \geq 2$. If $S$ has a separating systole that is intersected by another systole, then there exists a separating systole $\alpha$ that bounds a surface $Y'$ of signature $(0,3,2)$. The maximal half-collar $H_{\omega_{\alpha}}(\alpha) \subset S$ of $\alpha$ on the side opposite of $Y'$ has width
\[
      \omega_{\alpha} > \min\left\{\frac{\alpha}{4},\max\{1.319, \frac{\alpha}{4} - \arcsinh\left(\frac{1}{\sinh(\frac{\alpha}{4})}\right)\} \right\}.
\]
\label{thm:sys_coll_cusp}
\end{lem}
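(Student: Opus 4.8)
The statement splits into two essentially independent halves: the existence of a separating systole $\alpha$ cutting off a pants $Y'$ of signature $(0,3,2)$, and the lower bound on the width of its maximal half-collar on the far side. For the existence, I would start from the hypothesised separating systole $\beta$ and the systole $\gamma$ meeting it, in minimal position (automatic for distinct closed geodesics). Since $\beta$ separates, $\gamma$ meets it in an even number of points, and I would reduce to the generating case of two points $p,q$ (an innermost-arc argument handles more intersections). Writing $\beta=\beta_1\cup\beta_2$ and $\gamma=\gamma_1\cup\gamma_2$ for the subarcs cut off by $p,q$, the four surgered curves $\beta_i\cup\gamma_j$ are simple and piecewise geodesic, with broken lengths summing to $2\,\sy(S)+2\,\sy(S)=4\,\sy(S)$. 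Any such curve of broken length $\le\sy(S)$ must be freely homotopic to a loop about a cusp: it cannot be null-homotopic, since two distinct geodesics bound no bigon, and it cannot be essential and non-peripheral, since then its geodesic representative would be a closed geodesic strictly shorter than $\sy(S)$. The shortest curve, say $\beta_1\cup\gamma_1$, thus encircles a cusp $c_1$; and because $(\beta_1{+}\gamma_2)+(\beta_2{+}\gamma_1)=2\,\sy(S)$, at least one of these two also has length $\le\sy(S)$, producing a second cusp. If the second curve is $\beta_2\cup\gamma_1$ (same side as $c_1$), then a regular neighbourhood of $\beta\cup\gamma_1$ is a pair of pants whose two cuffs other than $\beta$ are these cusps, so $\alpha=\beta$ bounds $Y'\cong(0,3,2)$; if it is $\beta_1\cup\gamma_2$ (opposite side), a regular neighbourhood of $\beta_1\cup\gamma$ is a pants with cuffs $\gamma$ and the two cusps, so $\alpha=\gamma$ works and is separating. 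Either way $\alpha$ is a separating systole bounding a $(0,3,2)$ piece.

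For the collar estimate I would mimic the proof of Lemma~\ref{thm:sys_coll}. On the side of $\alpha$ opposite $Y'$, the closure of the maximal half-collar self-intersects at a point $p$, yielding a geodesic arc $\delta$ of length $2\omega_{\alpha}$ meeting $\alpha$ perpendicularly at two feet, which cut $\alpha$ into arcs $\alpha'\le\alpha/2$ and $\alpha''$. Let $\beta_{\mathrm{new}}$ be the geodesic freely homotopic to $\alpha'\cup\delta$, of length $<\alpha/2+2\omega_{\alpha}$. In the generic case $\beta_{\mathrm{new}}$ is essential and non-peripheral, so $\beta_{\mathrm{new}}\ge\sy(S)=\alpha$, and this yields $\omega_{\alpha}>\alpha/4$ exactly as in the compact case.

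The remaining, and in my view crucial, case is when $\beta_{\mathrm{new}}$ is instead homotopic to a loop about a cusp $c$ on the far side. Then $\alpha$, $c$ and a third simple closed geodesic $\xi$ cobound a pants $Y''$, with $\delta$ the perpendicular separating $c$ from $\xi$; the companion curve $\alpha''\cup\delta$ is homotopic to $\xi$, and $\xi\ge\sy(S)=\alpha$ gives the first inequality $2\omega_{\alpha}>\alpha'$. To upgrade this to the stated bound I would argue two ways: first, that $c$ carries a standard embedded cusp neighbourhood around which $\delta$ must wind, forcing the universal floor $\omega_{\alpha}>1.319$; and second, that the trigonometry of $Y''$ (a right-angled pentagon with one ideal vertex, the degeneration of the usual hexagon of a pants) bounds the short arc from below by $\alpha'\ge \alpha/2-2\arcsinh\!\left(1/\sinh(\alpha/4)\right)$, whence $\omega_{\alpha}>\alpha/4-\arcsinh\!\left(1/\sinh(\alpha/4)\right)$. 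Combining gives $\omega_{\alpha}>\max\{1.319,\ \alpha/4-\arcsinh(1/\sinh(\alpha/4))\}$ in the degenerate case; taking the minimum with the generic bound $\alpha/4$ proves the lemma, the crossover occurring at $\alpha=5.276=4\cdot 1.319$, which matches the constant of Theorem~\ref{thm:intermsys2}. The main obstacle is precisely this pentagon computation and the extraction of the two sharp constants; a secondary point requiring care is that the far side cannot itself be a $(0,3,2)$ piece (which would make $\xi$ a cusp rather than a genuine geodesic), and this is exactly the configuration ruled out by excluding the signature $(0,4)$ in the hypothesis.
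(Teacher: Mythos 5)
Your existence argument and your generic case are sound and close in spirit to the paper. For the existence of $\alpha$, the paper simply cites \cite{fp} (two systoles meet at most twice; two systoles meeting twice force one of them to cut off a pants with two cusps), whereas you reprove these facts by surgery at the two intersection points; that is a legitimate, more self-contained route, modulo the reduction to two intersection points (which you only sketch) and a check that the two short surgered curves encircle \emph{distinct} cusps. Your generic case ($\beta_{\mathrm{new}}$ essential and non-peripheral, giving $\omega_{\alpha}>\alpha/4$) is exactly the paper's Case 1, and your remark on the excluded signature $(0,4)$ matches the paper's treatment of your $\xi$ (the paper's $\eta$).

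The genuine gap is the degenerate case, and it is structural: both mechanisms you propose for the two constants would fail if carried out. (i) A cusp-neighbourhood argument yields only $\omega_{\alpha}\geq\arcsinh(1)\approx 0.881$ --- this is the paper's preliminary estimate (\ref{eq:delta1_alpha'}), which comes from the degenerate quadrilateral identity $\sinh(\omega_{\alpha})\sinh(\alpha'/2)=1$ in the far-side pants --- and cannot produce $1.319$. (ii) Your claimed bound $\alpha'\geq\alpha/2-2\arcsinh\left(1/\sinh(\alpha/4)\right)$ is false, and the trigonometry of $Y''$ proves the opposite: combining $\sinh(\omega_{\alpha})\sinh(\alpha'/2)=1$ with the pentagon identity $\sinh(\omega_{\alpha})\sinh(\alpha''/2)=\cosh(\xi/2)$ and $\xi\geq\alpha$ gives $\tanh(\alpha'/2)\leq\frac{1}{2}\tanh(\alpha/2)$, so $\alpha'$ is bounded above by roughly $\ln 3$, never comparable to $\alpha/2$; indeed any route of the form ``$2\omega_{\alpha}>\alpha'$ plus a lower bound on $\alpha'$'' is self-defeating, since the identity $\sinh(\omega_{\alpha})\sinh(\alpha'/2)=1$ makes a long $\alpha'$ force a short $\omega_{\alpha}$. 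What the paper actually does, and what your plan is missing, is to use the two-cusp pants $Y'$ on the \emph{other} side of $\alpha$, i.e.\ the output of the existence step (so the two halves of the lemma are not independent, contrary to your opening claim): $Y'$ contains the perpendicular $\delta_2$ from $\alpha$ to itself with $\sinh(\delta_2/2)\sinh(\alpha/4)=1$, which is precisely where the term $\arcsinh\left(1/\sinh(\alpha/4)\right)$ in the statement comes from. Closing up $\delta_1$ and $\delta_2$ by arcs $c_1,c_2\subset\alpha$ of total length $\leq\alpha/2$ (pigeonhole) produces a closed curve $\nu$ with $\alpha\leq\nu<\delta_1+\delta_2+\alpha/2$, hence the increasing bound $\omega_{\alpha}>\alpha/4-\arcsinh\left(1/\sinh(\alpha/4)\right)$. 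The constant $1.319$ then arises by playing this increasing bound off against the decreasing bound $\sinh(\omega_{\alpha})\geq\sqrt{4\cosh(\alpha/2)^2/\sinh(\alpha/2)^2-1}$ obtained from the pentagon identity and $\xi\geq\alpha$ (its infimum is $\arcsinh(\sqrt{3})\approx 1.317$), together with the input $\alpha>4\arcsinh(1)>3.525$ from \cite{gs} for two systoles crossing twice: the minimum over $\alpha$ of the maximum of the two bounds is attained at their crossover, which is where $1.319$ comes from. Neither bound alone yields the statement.
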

\begin{proof}
Let $S$ be a hyperbolic surface of signature $(g,n)$, where $(g,n)$ satisfies the conditions of the lemma. We first show that the surface $S$ has a separating systole that bounds a Y-piece $Y'$ of signature $(0,3,2)$. Therefore we will make use of the following facts:
\begin{itemize}
\item If $S'$ is a hyperbolic surface of signature $(g,n)$ then two systoles intersect at most twice (see \cite{fp}, \textbf{Proposition 3.2}).
\item If $S'$ is a hyperbolic surface of signature $(g,n)$ and two systoles intersect twice, then one of them bounds two cusps (see \cite{fp}, \textbf{Proposition 3.3}).
\item If two systoles in a hyperbolic surface $S'$ of signature $(g,n)$ intersect twice, then 
            $$\sy(S') > 4\arcsinh(1) > 3.525 \text{ \ \ (see \cite{gs}).}$$ 
\end{itemize}
Let $\alpha$ be a separating systole in $S$ that is intersected by another systole $\beta$.
Now suppose $\alpha$ and $\beta$ intersect once. Then by surface topology both are homologically non-trivial and both must be non-separating, which is a contradiction to the fact that $\alpha$ is separating. Hence $\alpha$ and $\beta$ intersect twice. It follows from \cite{fp}, \textbf{Proposition 3.3} that either $\alpha$ or $\beta$ bounds a surface $Y'$ of signature $(0,3,2)$. Let without loss of generality $\alpha$ be the boundary geodesic of $Y'$. We now prove the existence of the half-collar.\\
To this end we cut open $S$ along the systole $\alpha$. As $\alpha$ is separating, $S$ decomposes into two parts. Let $R$ be the part which is a surface of signature $(g,n-1,n-2)$. Consider the closure $\overline{H_{\omega_{\alpha}}(\alpha)}$ of the half-collar $H_{\omega_{\alpha}}(\alpha)$ in $R$. This maximal half-collar of $\alpha$  self-intersects in a point $p$. There exist two geodesic arcs $\delta'_1$ and $\delta''_1$ of length $\omega_{\alpha}$ emanating from $\alpha$ and perpendicular to $\alpha$ having the endpoint $p$ in common. These two arcs form a smooth geodesic arc $\delta_1$ (see Fig.~\ref{fig:X12}). The endpoints of $\delta_1$ on $\alpha$ divide $\alpha$ into two parts. We denote these two arcs by $\alpha'$ and $\alpha''$. Let without loss of generality $\alpha'$ be the shorter arc of these two. Then
\[
   \alpha' \leq \frac{\alpha}{2}.
\]
Let $\gamma$ be the simple closed geodesic in the free homotopy class of $\alpha'\cdot \delta_1$.
We have that
\begin{equation}
   \gamma < \alpha' + \delta_1 \leq \frac{\alpha}{2} + 2 \omega_{\alpha}.
\label{eq:omega_cusp_sep}
\end{equation}
Now two cases can occur: Either $\gamma$ is a simple closed geodesic of non-zero length or $\gamma$ has length zero i.e. defines a cusp.\\ 
\\
\textit{Case 1: $\gamma$ has non-zero length}\\ 
\\
In this case, we conclude that 
\[
     \omega_{\alpha} > \frac{\alpha}{4}.
\]  
Otherwise it would follow from Equation (\ref{eq:omega_cusp_sep}) that $\gamma$ is a simple closed geodesic of length smaller than the systole. A contradiction. This settles our claim in the first case.\\
\\
\textit{Case 2: $\gamma$ has length zero}\\ 
\\
It remains the second case, where $\gamma$ defines a cusp. In this case we have to work a bit harder. Let $\eta$ be the simple closed geodesic in the free homotopy class of $\alpha''\cdot \delta_1$. Now $\eta$ can not have length zero, because then $S$ would be a surface of signature $(0,4,4)$, which we excluded. Therefore let $Y$ be the surface of signature $(0,3,1)$, whose boundary curves are the cusp defined by $\gamma$, $\eta$ and $\alpha$. Then $ Y \cup Y' = X^1 \subset S$ defines a surface $X^1$ of signature $(0,4,3)$ (see Fig \ref{fig:X12}).
\begin{figure}[h!]
\SetLabels
\L(.27*.90) $X^1$\\
\L(.09*.60) $Y'$\\
\L(.09*.36) $Y$\\
\L(.44*.49) $\alpha$\\
\L(.14*.52) $\alpha'$\\
\L(.32*.45) $\alpha''$\\
\L(.35*.18) $\eta$\\
\L(.27*.60) $\delta_2$\\
\L(.35*.59) $\mathcal{R'}$\\
\L(.15*.40) $\,\delta_1$\\
\L(.12*.25) $\,\mathcal{R}$\\
\L(.30*.30) $\mathcal{P}$\\
\L(.22*.46) $\,\,c_2$\\
\L(.21*.53) $\,c_1$\\
\L(.20*.60) $\,\nu$\\
%%%%%%%%%%%%%%%%%%%%%%%%%%%%%%%%%%%
\L(.71*.90) $X^2$\\
\L(.89*.60) $\,Y'_2$\\
\L(.89*.36) $Y_2$\\
\L(.88*.49) $\alpha$\\
\L(.63*.18) $\eta_1$\\
\L(.85*.30) $\delta_1$\\
\L(.65*.46) $\,c_2$\\
\L(.85*.46) $\alpha'_1$\\
\L(.72*.45) $\alpha''_1$\\
%%%%%%%%%%%%%%%%%%%%%%%%%%%%%%%%%%%
\L(.80*.80) $\eta_2$\\
\L(.56*.70) $\,\delta_2$\\
\L(.59*.53) $\alpha'_2$\\
\L(.67*.53) $\alpha''_2$\\
\L(.75*.53) $\,\,c_1$\\
\L(.70*.33) $\nu$\\
\endSetLabels
%\ShowGrid
\AffixLabels{%
\centerline{%
\includegraphics[height=8cm,width=14cm]{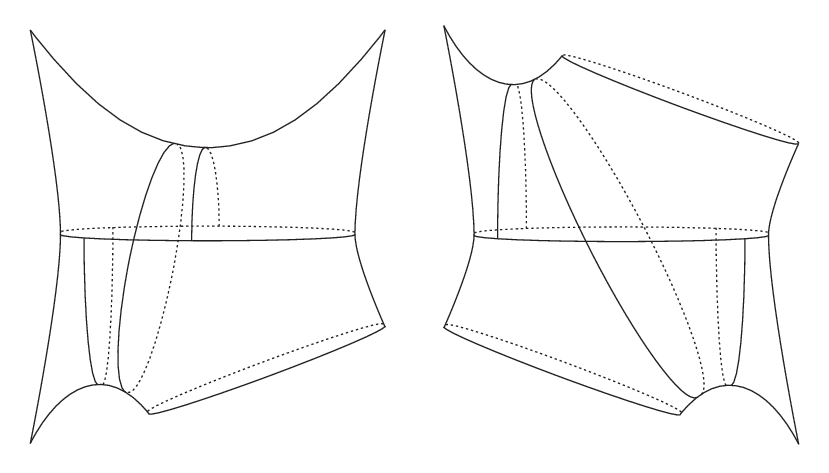}}}
\caption{The surface $X^1$ of signature $(0,4,3)$ and the surface $X^2$ of signature $(0,4,2)$.}
\label{fig:X12}
\end{figure}
Let $\delta_2$ be the shortest geodesic arc in $Y'$ connecting $\alpha$ with itself. By symmetry the two endpoints of $\delta_2$ on $\alpha$ divide $\alpha$ into two arcs, both of length $\frac{\alpha}{2}$. Let $\mathcal{R}'$ be a geodesic hyperbolic quadrilateral in $Y'$ with three angles of value $\frac{\pi}{2}$ that has one side on $\delta_2$ of length $\frac{\delta_2}{2}$, one side on $\alpha$ of length $\frac{\alpha}{4}$ and two sides of infinite length meeting in the point at infinity defined by one of the cusps. $\mathcal{R}'$ can be interpreted as a degenerated right-angled hyperbolic pentagon. It follows from the geometry of right-angled pentagons (see \cite{bu2}, p. 454) that 
\begin{equation}
\sinh(\frac{\delta_2}{2}) \cdot \sinh(\frac{\alpha}{4}) =1.
\label{eq:delta2_collar}
\end{equation}
We have furthermore that 
\[
\delta_1 \geq \alpha'  
\]
as otherwise $ \alpha \leq \eta < \delta_1 + \alpha'' < \alpha' + \alpha'' = \alpha$.\\
Let $\mathcal{R}$ be a geodesic hyperbolic quadrilateral in $Y$ with three angles of value $\frac{\pi}{2}$ that has one side on $\delta_1$ of length $\frac{\delta_1}{2}$, one side on $\alpha$ of length $\frac{\alpha'}{2}$ and two sides of infinite length meeting in the point at infinity defined by the cusp. It follows as for the quadrilateral $\mathcal{R}'$ that
\begin{equation}
\sinh(\frac{\delta_1}{2})^2  \geq \sinh(\frac{\delta_1}{2}) \sinh(\frac{\alpha'}{2}) = 1, \text{ \ hence \ } \frac{\delta_1}{2} \geq \arcsinh(1). 
\label{eq:delta1_alpha'}
\end{equation}
Now let $c_1$ and $c_2$ be two arcs on $\alpha$ that connect the endpoints of $\delta_1$ and $\delta_2$ on $\alpha$, such that
\[
 c_1 + c_2 \leq \frac{\alpha}{2}.
\]
The existence of these arcs follows from the pigeonhole principle. Let $\nu$ be the shortest simple closed geodesic in $X^1$ in the free homotopy class of $\delta_1 \cdot c_1 \cdot (\delta_2)^{-1} \cdot c_2$. As $\nu \geq \alpha$, we obtain from (\ref{eq:delta2_collar}), (\ref{eq:delta1_alpha'}) and the upper bound on $c_1+c_2$:
\begin{equation}
        \delta_1 +c_1 + \delta_2 + c_2 > \nu \geq \alpha  \text{ \ \ therefore \ \ } \frac{\delta_1}{2} \geq \max\{\arcsinh(1), \frac{\alpha}{4} - \arcsinh\left(\frac{1}{\sinh(\frac{\alpha}{4})}\right)\}.
\label{eq:delta1_collar_simp}        
\end{equation}
This is close to our desired result. Using more refined estimates for $\delta_1$ in $X^1$, one can show that indeed 
\begin{equation}
\frac{\delta_1}{2} \geq \max\{1.319,  \frac{\alpha}{4} - \arcsinh\left(\frac{1}{\sinh(\frac{\alpha}{4})}\right)\}.
\label{eq:delta1_lemma}
\end{equation}
To this end let $\mathcal{P}$ be a geodesic right-angled pentagon in $Y$ that has one side on $\delta_1$ of length $\frac{\delta_1}{2}$, one side on $\alpha$ of length $\frac{\alpha''}{2}$ and a side on $\eta$ of length $\frac{\eta}{2}$. It follows from the geometry of $\mathcal{P}$ that
\[
\sinh(\frac{\delta_1}{2}) \sinh(\frac{\alpha''}{2}) = \cosh(\frac{\eta}{2}).
\]
With the help of this equation, the equivalent relation in $\mathcal{R}$ from Equation (\ref{eq:delta1_alpha'}) and the fact that $\alpha' + \alpha'' = \alpha$, we find the following expression for $\delta_1$:
\begin{equation}
\sinh(\frac{\delta_1}{2}) = \sqrt{\frac{(\cosh(\frac{\alpha}{2}) + \cosh(\frac{\eta}{2}))^2}{\sinh(\frac{\alpha}{2})^2} -1}.
\label{eq:delta1_eta0}
\end{equation}
This can be deduced using the addition formulas for hyperbolic functions. As $\sinh(\frac{\delta_1}{2})$ is a monotonically increasing function with respect to $\eta$ and as $\eta \geq \alpha$, we obtain:
\begin{equation}
\sinh(\frac{\delta_1}{2}) \geq \sqrt{\frac{4\cosh(\frac{\alpha}{2})^2}{\sinh(\frac{\alpha}{2})^2} -1}.
\label{eq:delta1_eta}
\end{equation}
Hence in total we obtain (\ref{eq:delta1_lemma}) by combining (\ref{eq:delta1_collar_simp}) and (\ref{eq:delta1_eta}) using the fact that $\alpha \geq 3.525$ (see \cite{gs}). Combining inequality (\ref{eq:delta1_lemma}) with the inequality from \textit{Case 1} we obtain our lemma. 
\end{proof}
Now we show that a maximal surface $S_{max}$ that has a separating systole has another separating systole that intersects it. It follows with \textbf{Lemma \ref{thm:sys_coll_cusp}}:

\begin{lem} Let $S_{max}$ be a maximal non-compact hyperbolic surface of signature $(g,n) \neq (0,4)$, where $3g-3+n > 0$ and $n \geq 2$. If $S_{max}$ has a separating systole, then there exists a separating systole $\alpha$ that bounds a surface $Y'$ of signature $(0,3,2)$. The maximal half-collar $H_{\omega_{\alpha}}(\alpha)$ of $\alpha$ on the side opposite of $Y'$ has width
\[
      \omega_{\alpha} > \min\left\{\frac{\alpha}{4},\max\{1.319, \frac{\alpha}{4} - \arcsinh\left(\frac{1}{\sinh(\frac{\alpha}{4})}\right)\} \right\}.
\]
\label{thm:sep_sys_Smax}
\end{lem}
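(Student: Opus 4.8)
\section*{Proof proposal}

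The plan is to deduce this lemma directly from Lemma \ref{thm:sys_coll_cusp}. Indeed, the conclusion asserted here, namely the existence of a separating systole $\alpha$ bounding a $(0,3,2)$--piece $Y'$ together with the stated lower bound on the half-collar $H_{\omega_{\alpha}}(\alpha)$ on the side opposite $Y'$, is word for word the conclusion of Lemma \ref{thm:sys_coll_cusp}, whose hypothesis is that $S$ carries a separating systole that is intersected by another systole. Hence the entire task is to upgrade the hypothesis ``$S_{max}$ has a separating systole'' to ``$S_{max}$ has a separating systole that is intersected by another systole'', and for this I would use that $S_{max}$ is globally, hence locally, maximal. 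Once this is done, applying Lemma \ref{thm:sys_coll_cusp} closes the argument.

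So let $\alpha_0$ be a separating systole of $S_{max}$, and argue by contradiction, assuming that no other systole meets $\alpha_0$. The key input is the fact that at a local maximum of the systole function the systoles \emph{fill} the surface, i.e. every connected component of the complement of the union of all systoles is either a topological disk or a cusped disk (a once--punctured disk around a cusp); this is the non-compact analogue of the filling property in the circle of ideas of \cite{sc1} and \cite{ak}. Since $S_{max}$ is a global maximum it is in particular a local maximum, so its systoles fill.

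Next I would derive the contradiction geometrically. Two distinct simple closed geodesics are either disjoint or cross transversally; since by assumption no systole crosses $\alpha_0$, every systole other than $\alpha_0$ is disjoint from it and lies at positive distance from it. Consequently a one--sided collar of $\alpha_0$ meets no other systole, and the complementary region $R$ abutting $\alpha_0$ contains the entire circle $\alpha_0$ as a boundary component. Because $\alpha_0$ is a simple closed geodesic of positive length it is essential, so $R$ is not a disk; and because a geodesic can never be freely homotopic to a loop around a single cusp (such a loop is parabolic and has length zero), $R$ is not a cusped disk either. Thus $R$ is a forbidden complement component, contradicting the filling property. Hence $\alpha_0$ must be intersected by another systole, and Lemma \ref{thm:sys_coll_cusp} applies verbatim, producing the separating systole $\alpha$, the $(0,3,2)$--piece $Y'$, and the half-collar bound.

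The main obstacle is pinning down the filling statement in the cusped setting with the correct list of admissible complement pieces; once ``the systoles of a maximal surface fill, with every complementary region a disk or a cusped disk'' is in hand, the rest is immediate. It is worth stressing that one cannot instead argue through non-degeneracy: the Fenchel--Nielsen twist along the separating curve $\alpha_0$ keeps every systole length fixed, because no systole crosses $\alpha_0$, so $S_{max}$ lies in a one-parameter family of equally maximal surfaces and the maximum is non-strict. This is precisely why the filling characterization, rather than a rigidity or second-order argument, is the appropriate tool here.
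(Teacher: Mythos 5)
Your overall reduction coincides with the paper's: both proofs observe that the conclusion is exactly that of \textbf{Lemma \ref{thm:sys_coll_cusp}}, so that everything rests on upgrading ``$S_{max}$ has a separating systole'' to ``$S_{max}$ has a separating systole crossed by another systole.'' Where you diverge is the mechanism for that upgrade, and there your write-up has a genuine gap: the input you rely on --- that at a local maximum of the systole function on $\mathcal{M}_{g,n}$ the systoles fill, with every complementary component a disk or a cusped disk --- is neither proven nor attached to a precise reference, and you yourself flag it as the main obstacle. The deduction you draw from it is fine (a complementary region having the full circle $\alpha_0$ as a frontier component can be neither a disk nor a cusped disk, since a closed geodesic is neither null-homotopic nor freely homotopic to a parabolic loop), but the filling statement in the cusped setting is a substantive theorem, not a formality, so as it stands your proof is conditional.

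The more instructive point is that your closing claim --- that ``one cannot instead argue through non-degeneracy'' --- is exactly backwards: that is the paper's proof. If no systole crosses $\alpha_0$, a small Fenchel--Nielsen twist along $\alpha_0$ leaves every systole length unchanged and, for a sufficiently small twist, creates no new geodesics of systolic length (only finitely many geodesics are short, and their lengths vary continuously), so one obtains a non-constant continuous family of equally maximal surfaces in $\mathcal{M}_{g,n}$. Far from being a dead end, this \emph{is} the contradiction: maximal surfaces form a discrete (indeed finite) subset of moduli space, the same finiteness fact quoted in the paper's introduction and invoked in its proof of this lemma. In other words, the non-strictness of the maximum that you treat as the obstruction is precisely what gets harvested against discreteness. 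Note also that the filling property you want to cite is itself normally established by deformation arguments of exactly this type, so your route is both heavier than necessary and, in effect, built on top of the argument you rejected; replacing your unproven filling lemma by the twist-plus-discreteness argument closes the gap immediately.
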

\begin{proof}
Let $S_{max}$ be a maximal hyperbolic surface of signature $(g,n)$ that has a separating systole $\alpha$ and whose signature satisfies the conditions of the lemma. Now $\alpha$ is intersected by another systole $\beta$. This follows from the fact that there is only a discrete number of maximal surfaces in the moduli space $\mathcal{M}_{g,n}$ of hyperbolic surfaces of signature $(g,n)$. Otherwise we could generate a smooth family of maximal surfaces by applying a small twist around $\alpha$, a contradiction. The remainder of \textbf{Lemma \ref{thm:sep_sys_Smax}} then follows from \textbf{Lemma \ref{thm:sys_coll_cusp}}.
\end{proof}

\textbf{proof of Theorem \ref{thm:intermsys2}}\\
\\
Let $S_{max}$ be a maximal surface of signature $(g,n)$ that has a separating systole and such that $(g,n)$ satisfies the conditions of the theorem. We cut open $S_{max}$ along the systole $\alpha$ that bounds a Y-piece $Y'$ of signature $(0,3,2)$ (see \textbf{Lemma \ref{thm:sep_sys_Smax}}). Let $R$ be the remainder of $S_{max}$ of signature $(g,n-1,n-2)$. 
Take two copies, $R^1$ and $R^2$, of $R$ and let $\alpha_1$ be the boundary of $R^1$ and $\alpha_2$ be the boundary of $R^2$.\\
\\
\textit{1. $\msy(2g,2n-4) > \min\left\{\msy(g,n),\max\{5.276,\msy(g,n)-4\arcsinh\left(\frac{1}{\sinh(\frac{\msy(g,n)}{4})}\right)\}\right\}$.}\\
\\
To obtain this inequality, we  paste $R^1$ and $R^2$  together with arbitrary twist parameter along the boundaries to obtain a surface $R'$ of signature $(2g,2n-4,2n-4)$. Due to the half-collar at $\alpha_1$ and $\alpha_2$ we obtain by similar arguments as in the proof of \textbf{Theorem \ref{thm:intermsys}} that  $\msy(2g,2n-4) > \sy(R') \geq \min\left\{\msy(g,n),\max\{5.276,\msy(g,n)-4\arcsinh\left(\frac{1}{\sinh(\frac{\msy(g,n)}{4})}\right)\}\right\}.$
\\
\textit{2. $\msy(2g+1,2n-4) > \frac{\msy(g,n)}{2}$.}\\
\\
As in the proof of \textbf{Theorem \ref{thm:intermsys}-2}, let $F_{max}$ be a surface of signature $(1,2,0)$ whose interior systole $\sy(F_{max})$ has maximal length among all F-pieces with two boundary geodesics of equal length $b$. Then
\[
   \sy(F_{max}) > \frac{b}{2}.
\]
Let $F'_{max}$ be such a surface where $\sy(F_{max}) = \msy(g,n) = \alpha$. We paste together $R^1,F'_{max}$ and $R^2$ along the boundaries to obtain a surface $R''$ of signature $(2g+1,2n-4,2n-4)$. It follows from the properties of this surface by similar arguments as in the proof of \textbf{Theorem \ref{thm:intermsys}-2} that $\msy(2g,2n-4) > \sy(R'') > \frac{\msy(2g+1,2n-4)}{2}$. \\
\\
\textit{3. $\msy(g+1,n-2) > \frac{\msy(g,n)}{3}$.}\\
\\
We call a surface of signature $(1,1,0)$ a \textit{Q-piece}. Let $b$ be the boundary geodesic of a Q-piece. Let $Q_{max}$ be a surface whose interior systole has maximal length among all Q-pieces with a boundary of length $b$. Denote by $\sy(Q_{max})$ the length of the interior systole of $Q_{max}$. It was shown in \cite{sc1} and \cite{pa3}, \textbf{Proposition 5.4} that
\[
    \cosh( \frac{\sy(Q_{max})}{2})=   \cosh( \frac{b}{6})+\frac{1}{2} \Rightarrow \sy(Q_{max}) > \frac{b}{3}.
\]
Let $Q'_{max}$ be such a surface with boundary length $\alpha$.
To prove the final part of the theorem, we paste together $R$ and $Q'_{max}$ along the boundaries to obtain a surface $R^*$ of signature $(g+1,n-2,n-2)$. It follows from the properties of this surface by similar arguments as in the proof of \textbf{Theorem \ref{thm:intermsys}} that  $\msy(g,2n-4) > \sy(R^*) > \frac{\msy(2g+1,2n-4)}{3}$. This concludes the proof of \textbf{Theorem \ref{thm:intermsys2}}. \hfill  $\square$ \\
\\
We now explain shortly, why no strong collar lemma should hold for non-separating systoles of non-compact hyperbolic Riemann surfaces. To this end we start by trying to prove such a collar lemma and then show where the arguments fail.\\ 
Let $S$ be a  hyperbolic surface of signature $(g,n)$ that has a non-separating systole $\alpha$. The closure $\overline{C_{\omega_{\alpha}}(\alpha)}$ of the maximal collar of $\alpha$ self-intersects in a point $p_1$. 
There exist two geodesic arcs $\delta'_1$ and $\delta''_1$ of length $\omega_{\alpha}$ emanating from $\alpha$ and perpendicular to $\alpha$ having the endpoint $p_1$ in common. These two arcs form a smooth geodesic arc $\delta_1$ (see Fig.~\ref{fig:X12}). The endpoints of $\delta_1$ on $\alpha$ divide $\alpha$ into two parts. We denote these two arcs by $\alpha'_1$ and $\alpha''_1$. Let without loss of generality $\alpha'_1$ be the shorter arc of these two. Then
\[
   \alpha'_1 \leq \frac{\alpha}{2}.
\]
Suppose that $\delta'_1$ and $\delta''_1$ emanate from the same side of $\alpha$.\\
Let $\gamma_1$ be the simple closed geodesic in the free homotopy class of $\alpha'_1\cdot \delta_1$ and suppose that $\gamma_1$ is of zero length, i.e. defines a cusp. Let $\eta_1$ be the curve in the free homotopy class of $\alpha''_1 \cdot \delta_1$. Then $\eta_1$ must be a non-separating curve, because otherwise $\alpha$ would be separating. We have that
\[
     \alpha \leq \eta_1 < \alpha''_1 + \delta_1 \leq \alpha + 2 \omega_{\alpha}.
\]
Let $Y_2$ be the surface of signature $(0,3,1)$ defined by the boundary geodesics $\eta_1$, $\alpha$ and the cusp defined by $\gamma_1$. Now we assume that $\alpha=\eta_1$. In this case it can be shown from the hyperbolic geometry of the Y-piece $Y_2$ (see Equation (\ref{eq:delta1_eta0}) and (\ref{eq:delta1_eta}) replacing $\eta$ by $\eta_1$) that 
\begin{equation}
\delta_1 =  2 \omega_{\alpha} \leq 2\arcsinh(\sqrt{3})+ \epsilon,
\label{eq:small_collar}
\end{equation}
where $\epsilon > 0$ and $\epsilon \to 0$ if $\alpha \to \infty$.\\
Now we might like to try to show the existence of a large half-collar on the opposite side of $\alpha$. However, we can not exclude that the situation is symmetric. More precisely, suppose that there is a Y-piece $Y'_2$ isometric to $Y_2$ embedded in $S$ and that $Y_2$ and $Y'_2$ have common boundary $\alpha$. Let $\psi: Y_2 \rightarrow Y'_2$ be the isometry between these two surfaces and set
\[ 
  \psi(\delta'_1) = \delta'_2,\,\,\psi(\delta''_1) = \delta''_2,\,\, \psi(\delta_1) = \delta_2,\,\, \psi(\alpha'_1) = \alpha'_2 \text{ \ etc. \ }
\]
In this case the closure $\overline{C_{\omega_{\alpha}}(\alpha)}$ of the maximal collar of $\alpha$ on the opposite side of $Y'_2$ also self-intersects in a point $p_2 \in Y_2$, such that the two geodesic arcs of length $\omega_{\alpha}$ emanating from $\alpha$ and perpendicular to $\alpha$ having the endpoint $p_2$ in common are $\delta'_2$ and $\delta''_2$.\\ 
Now let $c_1$ and $c_2$ be two arcs on $\alpha$ that connect the endpoints of $\delta_1$ and $\delta_2$ on $\alpha$, such that $c_1 + c_2$ is minimal. Let $\nu$ be the curve in the free homotopy class of $\delta_1 \cdot c_1 \cdot (\delta_2)^{-1} \cdot c_2$. Now 
\[
             \alpha  \leq \nu.
\]
One would like to argue that $\omega_\alpha$ is in fact large due to this inequality, which would lead to a contradiction to inequality (\ref{eq:small_collar}).\\
But then again, if the twist parameter in the pasting of $Y + Y'$ is $\frac{1}{2}$ i.e for any point $q$ on $\alpha$, $\dist(q,\psi(q))=\frac{\alpha}{2}$ these arguments fail. This can be shown by calculating the length of $\nu$ explicitly. However, for the sake of brevity, we omit the details.\\
Though this example does not provide a rigorous proof that a non-separating systole of a non-compact maximal surface does not have a large collar, it leads us to conjecture that this can indeed occur.\\  
\\
We now prove the following lemma, from which follows \textbf{Theorem \ref{thm:intermsys3}} by passing to a maximal surface. 
\begin{lem} Let $S$ be a non-compact hyperbolic Riemann surface of signature $(g,n)$, where $3g-3 + n > 0$ and $g\geq 1$. There is a normal cover $\tilde{S}$ of order $k$ and of signature $(k(g-1)+1, k n)$, such that 
\[
\sy(\tilde{S}) \geq \sy(S).
\]
\label{thm:kcover_lem2}
\end{lem}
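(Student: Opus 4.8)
The plan is to adapt the cyclic-cover construction from \textbf{Note \ref{thm:kcover_note}} to the cusped setting, where the key geometric fact that makes everything work is that the injectivity radius does not decrease when passing to a normal cover. First I would pick a non-separating simple closed geodesic $\alpha$ in $S$; since $g \geq 1$, such a curve always exists (it does not surround cusps). Cutting $S$ open along $\alpha$ yields a surface $S^c$ of signature $(g-1,n+2,n)$, that is, a surface with the original $n$ cusps plus two new geodesic boundary components $\alpha^c_1$ and $\alpha^c_2$ of equal length, which are the two copies of $\alpha$. The genus drops by one under this cut because $\alpha$ is non-separating.

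Next I would take $k$ isometric copies $(S_i^c)_{i=1,\ldots,k}$ and glue them cyclically along their $\alpha$-boundaries exactly as in \eqref{eq:paste1}, namely $\alpha^k_1 \sim \alpha^1_2$ and $\alpha^i_1 \sim \alpha^{i+1}_2$, choosing all $k$ twist parameters equal to the original twist $tw_\alpha$ of $S$ at $\alpha$. Call the result $\tilde{S}$. An Euler-characteristic count verifies the claimed signature: each copy contributes $n$ cusps, so $\tilde{S}$ has $kn$ cusps, and the genus works out to $k(g-1)+1$ by the same computation as in the compact case (each gluing of two geodesic boundaries raises the genus, and the cyclic arrangement adds exactly one handle beyond the $k$ copies of the genus-$(g-1)$ pieces). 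The uniform choice of twist parameters lets the natural isometries $\phi_i: S_i^c \setminus \partial S_i^c \to S_{i+1 \bmod k}^c \setminus \partial S_{i+1 \bmod k}^c$ extend across the glued boundaries to a global isometry $\tilde{\phi}: \tilde{S} \to \tilde{S}$ of order $k$, so that $\tilde{S} \bmod \langle \tilde{\phi}\rangle \simeq S$ and $\tilde{S}$ is a normal cover of $S$.

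The conclusion $\sy(\tilde{S}) \geq \sy(S)$ then follows from the same injectivity-radius argument as in \textbf{Note \ref{thm:kcover_note}}: the injectivity radius never decreases under a covering map, and for a complete hyperbolic surface one has $r_{inj} = \tfrac{1}{2}\sy(\cdot)$ (the systole being realized by a shortest closed geodesic, whose half-length is the injectivity radius at its midpoint). Hence $\sy(\tilde{S}) = 2 r_{inj}(\tilde{S}) \geq 2 r_{inj}(S) = \sy(S)$, which is the desired inequality. \textbf{Theorem \ref{thm:intermsys3}} then follows immediately by applying the lemma to a maximal surface of signature $(g,n)$.

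I expect the main obstacle to be purely bookkeeping rather than conceptual: verifying that the signature of $\tilde{S}$ is exactly $(k(g-1)+1, kn)$ and that the cusps behave correctly under the cut-and-paste, since in the cusped case the relation $r_{inj} = \tfrac{1}{2}\sy$ must be argued for a non-compact complete hyperbolic surface (one should note that the systole is an interior closed geodesic and that cusps, being of infinite injectivity-radius character at the thin end, do not interfere with this identity). The one point requiring a little care is confirming that a non-separating $\alpha$ exists and can be made a partition curve avoiding all cusps when $g \geq 1$; once that is in place, the rest of the argument is a direct transcription of the compact construction.
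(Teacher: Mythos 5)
Your construction is exactly the paper's (cut along a non-separating $\alpha$, glue $k$ copies cyclically with equal twists, get a normal cover $\tilde{S}$ with deck group $\langle\tilde{\phi}\rangle$), and the signature bookkeeping is fine. The genuine gap is in the final step: the identity $r_{inj} = \tfrac{1}{2}\sy$ is \emph{false} for surfaces with cusps, and this is precisely the point where the paper departs from the compact case. On any hyperbolic surface with a cusp, the loops around the cusp become arbitrarily short as one moves into the thin end (in horocyclic coordinates the horocycle at height $y$ has length $1/y \to 0$), so $r_{inj}(S) = 0$ while $\sy(S) > 0$; your parenthetical claim that cusps have ``infinite injectivity-radius character at the thin end'' is exactly backwards. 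The paper states this obstruction explicitly: ``we can not use the argument with the injectivity radius. In the non-compact case with cusps we have that $r_{inj}(S) = 0$, whereas $\sy(S) > 0$.'' As written, your chain $\sy(\tilde{S}) = 2r_{inj}(\tilde{S}) \geq 2r_{inj}(S) = \sy(S)$ degenerates to $0 \geq 0$ and yields no information, and no amount of ``arguing the identity more carefully'' can repair it, since the identity itself fails.

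The correct replacement, which is what the paper does, works directly with closed geodesics rather than with the injectivity radius: let $\tilde{\alpha}$ be a systole of $\tilde{S}$ and let $p:\tilde{S}\to S$ be the covering map. Since $p$ is a local isometry, $p(\tilde{\alpha})=\alpha'$ is a smooth closed geodesic in $S$ (possibly non-simple) of the same length as $\tilde{\alpha}$. Because $\alpha'$ has positive length it cannot be a curve bounding a cusp, so it is an honest closed geodesic of $S$, and every closed geodesic of $S$ has length at least $\sy(S)$. Hence $\sy(\tilde{S}) = \alpha' \geq \sy(S)$. With this substitution your argument becomes the paper's proof; the rest of your proposal, including deducing \textbf{Theorem \ref{thm:intermsys3}} by applying the lemma to a maximal surface, is correct.
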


\begin{proof}
Let $S$ be a hyperbolic Riemann surface of signature $(g,n)$, where $3g-3 + n > 0$ and $g\geq 1$. To prove the theorem we construct a new surface $\tilde{S}$ of signature $(k(g-1)+1,kn)$ from $S$ such that
\[
\sy(\tilde{S}) \geq \sy(S).
\]
Let $\alpha$ be a non-separating simple closed geodesic of $S$. We use the same construction from the proof of \textbf{Note \ref{thm:kcover_note}} to construct a covering surface $\tilde{S}$ by cutting open $S$ along $\alpha$ and pasting $k$ copies of the cut surface as described there. Recall that $\tilde{\phi}$ is the isometry, such that 
\[
S = \tilde{S} \mod \tilde{\phi} \text{ \ \  and let \ \ } p: \tilde{S} \rightarrow S
\]
be the corresponding covering map which is a local isometry. Now, to prove the intersystolic inequality between $\sy(\tilde{S})$ and $\sy(S)$ we can not use the argument with the injectivity radius. In the non-compact case with cusps we have that $r_{inj}(S) = 0$, whereas $\sy(S) > 0$.\\ 
Therefore we will prove the inequality from \textbf{Lemma \ref{thm:kcover_lem2}} in another way. To this end we remark the following:
\begin{itemize}  
\item A systole in $S$ or $\tilde{S}$ is a simple closed geodesic.
\item For any simple closed geodesic $\tilde{\gamma} \in \tilde{S}$, we have that $p(\tilde{\gamma})=\gamma$  is a smooth closed geodesic. The length of $\gamma$ is smaller or equal to the length of $\tilde{\gamma}$, but $\gamma$ has always non-zero length.   
\end{itemize}
Now let $\tilde{\alpha}$ be a systole of $\tilde{S}$ of length $\sy(\tilde{S})$. Then $p(\tilde{\alpha})=\alpha'$ is a smooth closed geodesic in $S$ of non-zero length. As it is smooth it is the shortest curve in its free homotopy class. Now if $\alpha'$ defines a cusp, it would follow that $\alpha'$ has zero length. A contradiction. Hence $\alpha'$ has non-zero length and it follows that
\[
   \sy(\tilde{S}) = \tilde{\alpha} \geq \alpha' \geq \sy(S).
\]
This concludes our proof. 
\end{proof}

\noindent Hugo Akrout\\	
\noindent Department of Mathematics, Universit\'e Montpellier 2 \\
\noindent place Eug\`ene Bataillon, 34095 Montpellier cedex 5, France \\
\noindent e-mail: \textit{akrout@math.univ-montp2.fr} \\
\\
\noindent Bjoern Muetzel\\
\noindent Institute for Algebra and Geometry, Karlsruhe Institute of Technology\\
\noindent Kaiserstrasse 89-93, office 4B-03 (Allianz building)\\
\noindent 76133 Karlsruhe, Germany\\
\noindent e-mail: \textit{bjorn.mutzel@gmail.com}

\end{document}